\tikzstyle{perspective nqgs}=[%
\definecolor{col1}{RGB}{233,80,62}
\definecolor{col2}{RGB}{0,136,119}
\definecolor{col3}{RGB}{185,15,34}
\definecolor{col4}{RGB}{36,53,114}
\newcommand{\Q}{\mathbb Q}
\newcommand{\Z}{\mathbb Z}
\newcommand{\R}{\mathbb R}
\newcommand{\N}{\mathbb N}
\newcommand{\va}{\mathbf a}
\newcommand{\vx}{\mathbf x}
\newcommand{\vy}{\mathbf y}
\newcommand{\vu}{\mathbf u}
\newcommand{\vv}{\mathbf v}
\newcommand{\vw}{\mathbf w}
\newcommand{\ve}{\mathbf e}
\newcommand{\0}{\mathbf 0}
\newcommand{\1}{\mathbf 1}
\newcommand{\ov}[1]{\overline{#1}}
\newcommand{\eps}{\varepsilon}
\newcommand{\vxcore}{\vx_{\core}}
\newcommand{\spectrum}{\mathcal S}
\newcommand{\speccan}[1][\alpha]{\mathcal S^{\mathsf{can}}_{#1}}
\newcommand{\pclass}{\mathcal P}
\newcommand{\pccan}[1][\alpha]{\mathcal P^{\mathsf{can}}_{#1}}
\newcommand{\nclass}{\mathcal N}
\newcommand{\cf}{\mathcal{N}_{\core}}
\newcommand{\setA}{{\mathcal A}}
\newcommand{\adjP}[2][c]{#2^{(#1)}}
\newcommand{\sprod}[1]{\left\langle\,#1\,\right\rangle}
\DeclareMathOperator{\relint}{relint}
\DeclareMathOperator{\cone}{cone}
\DeclareMathOperator{\aff}{aff}
\DeclareMathOperator{\conv}{conv}
\DeclareMathOperator{\qcd}{codeg_\Q}
\DeclareMathOperator{\core}{core}
\DeclareMathOperator{\Caff}{C_{\mathsf{aff}}}
\DeclareMathOperator{\Acore}{A_{\core}}
\DeclareMathOperator{\mountain}{mountain}
\DeclareMathOperator{\height}{height}
\DeclareMathOperator{\rank}{rank}
\theoremstyle{plain}
\newtheorem{theorem}{Theorem}[section]
\newtheorem{corollary}[theorem]{Corollary}
\newtheorem{lemma}[theorem]{Lemma}
\theoremstyle{definition}
\newtheorem{remark}[theorem]{Remark}
\newtheorem{example}[theorem]{Example}
\newtheorem{conjecture}[theorem]{Conjecture}
\date{\today}
\title{Finiteness of the Polyhedral $\Q$-Codegree Spectrum}
\author{Andreas Paffenholz}
\thanks{The author is supported by the Priority Program 1489 of the German Research Council (DFG)}
\address{TU Darmstadt, Fachbereich Mathematik, Dolivostr. 15, 64293 Darmstadt, Germany}
\email{paffenholz@mathematik.tu-darmstadt.de}
\begin{document}

\begin{abstract}
  In this paper we show that the spectrum of the $\Q$-codegree of a
  lattice polytope is finite above any positive threshold in the class
  of lattice polytopes with $\alpha$-canonical normal fan for any
  fixed $\alpha>0$. For $\alpha=1/r$ this includes lattice polytopes
  with $\Q$-Gorenstein normal fan of index $r$.  In particular, this
  proves Fujita's Spectrum Conjecture for polarized varieties in the
  case of $\Q$-Gorenstein toric varieties of index $r$.
\end{abstract}

\maketitle

\numberwithin{equation}{section}
\numberwithin{figure}{section}

\section{Introduction}

Let $P \subseteq \R^d$ be a $d$-dimensional rational polytope given by
\begin{align}
  P\ =\ \left\{\; \vx\,\mid\, \sprod{\va_i,\vx}\ \le\ b_i\,,\quad 1\le
    i\le n\; \right\}\,\label{eq:ineq}
\end{align}
with \emph{primitive} linear functionals $\va_i\in(\Z^d)^*$ for $1\le
i\le n$, where a functional is primitive if the greatest common
divisor of its entries is $1$. We may assume that this representation
is \emph{irredundant}, \textit{i.e.}\ no inequality $\sprod{\va_i,
  \vx}\le b_i$ can be omitted without changing $P$. The polytope $P$
is a \emph{lattice polytope} if all its vertices are integral.

Let $X$ be a normal projective algebraic variety. A line bundle $L$ on
$X$ is \emph{ample} if it has positive intersection with all
irreducible curves on $X$, and $L$ is \emph{big} if the global section
of some multiple define a birational map to some projective space. Any
ample line bundle is big.  A \emph{polarized variety} is a pair
$(X,L)$ of a normal projective algebraic variety $X$ and an ample line
bundle $L$ on $X$. Let $K_X$ be the canonical class on $X$. For a
rational parameter $c>0$ the \emph{adjoint line bundles}
on $X$ are the line bundles $L + c\cdot K_X$. They define
invariants that have been used for classifications of projective toric
varieties. In particular, the \emph{unnormalized spectral value}
$\mu(L)$ of the polarized variety is given by
\begin{align*}
  \mu(L)^{-1}\ &:= \ \sup \left(c \in \Q \;\middle|\; 
    L\, +\,c\cdot K_X \text{ is big} \right)\,,
\end{align*}
see~\cite[Ch.~7.1.1]{BS95}. Note that $\mu(L)<\infty$ as $L$ is
big. The \emph{spectral value} $\sigma(L):=d+1-\mu(L)$ has originally
been considered by Sommese~\cite[\S 1]{MR855885}. Similar notions have
been defined several times, \emph{e.g,} $\kappa\eps(X,L):=-\mu(L)$ is
the \emph{Kodeira energy} of $(X,L)$.  Fujita has stated the following
conjecture on the values of $\mu(L)$.
\begin{conjecture}[{Spectrum Conjecture, Fujita~\cite{Fuj92} and
    \cite[(3.2)]{Fuj96}}]\label{conj:fujita} For any $d\in \N$ let
    $S_d$ be the set of unnormalized spectral values of a smooth
    polarized $d$-fold. Then, for any $\eps>0$, the set $\{s\in
    S_d\mid s>\eps\}$ is a finite set of rational numbers.
\end{conjecture}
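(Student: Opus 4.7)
The plan is to address the conjecture in the toric setting, where the techniques of this paper apply; the general smooth case appears to require methods well beyond polyhedral combinatorics. For a polarized toric $d$-fold $(X_P,L_P)$ arising from a full-dimensional lattice polytope $P\subseteq\R^d$ given by \eqref{eq:ineq}, a direct computation on torus-invariant divisors shows that $L+cK_{X_P}$ corresponds, for rational $c\ge 0$, to the \emph{adjoint polytope} $\adjP{P}:=\{\vx\mid \sprod{\va_i,\vx}\le b_i-c,\ 1\le i\le n\}$. This adjoint line bundle is big exactly when $\adjP{P}$ is full-dimensional, so $\mu(L)^{-1}$ is nothing but the $\Q$-codegree $\qcd(P)$: the infimum of $c\ge 0$ for which $\adjP{P}$ drops dimension. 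Fujita's conjecture for $\Q$-Gorenstein toric $d$-folds of index $r$ therefore reduces to the polyhedral statement that the set of $\qcd(P)$ with $P$ a $d$-dimensional lattice polytope having $\Q$-Gorenstein normal fan of index $r$ is finite above each $\eps>0$.

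Fix $\alpha>0$ and $\eps>0$, and consider $P$ in the broader class of $\alpha$-canonical normal fans with $\qcd(P)\ge \eps$. The adjoint $\adjP{P}$ shrinks monotonically with $c$ and degenerates at $c=\qcd(P)$ to a lower-dimensional rational polytope $\core(P)$, supported on an affine subspace $\aff(\core(P))\subseteq\R^d$. The facets whose supporting hyperplanes reach $\core(P)$ at the critical value --- the \emph{active} facets --- are the carriers of the obstruction to enlarging $c$, and the $\alpha$-canonical hypothesis forces their primitive inner normals to sit in an $\alpha$-controlled position with respect to the lattice. I would then decompose $P$ as a ``mountain'' built over the core: the core plus the combinatorial data of how the remaining facets slope away from $\core(P)$.

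The main analytic step is to bound the combinatorial complexity of the active configuration. The $\alpha$-canonical condition bounds primitive normals in terms of $\alpha$ alone, so only finitely many arrangements of active normals are compatible with a given $\aff(\core(P))$; each such arrangement, together with $\qcd(P)\ge\eps$, constrains the lattice-point description of $P$ transverse to $\core(P)$ to finitely many possibilities, and the contribution of these transverse data to $\qcd(P)$ takes only finitely many rational values. The ``parallel'' contribution from $\core(P)$ itself should then be handled by an induction on dimension, since $\core(P)$ is a rational polytope whose own invariants govern how deeply one can still shrink it.

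The main obstacle will be closing this induction. The core is only rational and need not be a lattice polytope, and its normal fan only inherits a weakened $\alpha'$-canonicity from that of $P$; one has to show that $\alpha'$ is still bounded below in terms of $\alpha$ and $\eps$ so that the inductive class remains nontrivial and controlled. Equally delicate is the treatment of ``long'' directions: directions in which $P$ extends far beyond $\core(P)$ can in principle produce arbitrarily small but positive codegrees, so finiteness above $\eps$ will hinge on a careful accounting of how the lattice-point and $\alpha$-canonical constraints near the active facets cap the extent of $P$ in each transverse direction.
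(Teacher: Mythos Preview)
Your reduction to the polyhedral setting is sound (one slip: $\mu(L)=\qcd(P)$, not $\mu(L)^{-1}=\qcd(P)$; the $\Q$-codegree is the \emph{reciprocal} of the critical $c$). But from that point on your plan diverges from the paper's and carries the unresolved obstacles you yourself identify. The assertion that ``the $\alpha$-canonical condition bounds primitive normals in terms of $\alpha$ alone, so only finitely many arrangements of active normals are compatible with a given $\aff(\core(P))$'' does not yield finiteness: the $\alpha$-canonical hypothesis does not bound individual normals, and there are infinitely many candidate affine hulls. The paper's mechanism is different. It shows that the convex hull $\Acore=\conv(\cf(P))$ of the core normals contains $\0$ in its relative interior (Lemma~\ref{lem:zeroInInterior}) and that $\alpha$-canonicity forces $\relint(\alpha\Acore)\cap(\Z^d)^*=\{\0\}$ (Lemma~\ref{lem:spectrumcanonical}); thus $\Acore$ is a lattice polytope with exactly one interior point in $\lceil 1/\alpha\rceil(\Z^d)^*$, and the Lagarias--Ziegler theorem (Theorem~\ref{thm:lagariasZiegler}) gives finitely many such configurations up to lattice equivalence.

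More importantly, the paper uses no induction on dimension and never treats $\core(P)$ as a lower-dimensional polytope with its own degraded $\alpha'$-canonicity, so the difficulties you anticipate about rationality of the core and loss of control on $\alpha'$ simply do not arise. Once the core-normal set $\setA=\{\va_1,\dots,\va_m\}$ is fixed, the paper exploits a purely arithmetic constraint: for any $\vxcore\in\relint(\core P)$ one has $\sprod{\va_i,\vxcore}+c=b_i\in\Z$ for $1\le i\le m$, i.e.\ $A\vxcore+c\1\in\Z^m$ with $A$ the matrix of rows $\va_i$. Because the $\va_i$ positively span their linear hull, the all-ones vector $\1$ lies outside the column span $L_\setA$ of $A$, and hence only finitely many translates $L_\setA+c\1$ with $0<c\le\eps^{-1}$ meet $\Z^m$ (Lemma~\ref{prop:spectrumcanonical}). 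This single linear-algebra observation replaces your entire transverse/parallel decomposition, the inductive step on $\core(P)$, and the worry about ``long directions''.
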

This has been proved by Fujita for dimensions $d=2,3$ in
1996~\cite{Fuj96}. Recently, Cerbo~\cite{1210.5324} proved the related
log spectrum conjecture~\cite[(3.7)]{Fuj96}.

There is a fundamental connection between combinatorial and algebraic
geometry.  Any lattice polytope $P$ uniquely defines a pair
$(X_P,L_P)$, where $X_P$ is a projective toric variety polarized by an
ample line bundle $L_P$, and vice versa (see,
\textit{e.g.},~\cite{Ful93}). There is a close connection between
combinatorial and algebraic notions.  Using this correspondence we
give a polyhedral interpretation of the spectral value in the case of
projective toric varieties.

Let $P$ be a lattice polytope given as in \eqref{eq:ineq} by an
irredundant set of inequalities with primitive normals. The family of
\emph{adjoint polytopes} associated to $P$ is given by
\begin{align}
  \adjP[c] P\ =\ \left\{\; \vx\,\mid\, \sprod{\va_i,\vx}\ \le\
    b_i\,-\,c\,,\quad 1\le i\le n\; \right\}\label{eq:adjoint}
\end{align}
for a rational parameter $c>0$.  This notion has been introduced by
Dickenstein, Di Rocco, and Piene~\cite{DDP09}; see
also~\cite{1105.2415}.  If $P$ is the polytope associated to a
polarized projective toric variety $(X_P,L_P)$, then $\adjP[q]{(pP)}$
for some integral $p,q>0$ is the polytope that corresponds to the the
line bundle $p\cdot L+q\cdot K_X$. Clearly $\adjP P$ is empty for
large $c$. Let $c=\nicefrac qp>0$ for $p,q\in\Z_{>0}$ be the maximal
rational number such that $\adjP P$ (or, equivalently,
$\adjP[q]{(pP)}$) is non-empty.  Its reciprocal, the $\Q$-codegree
$\qcd\, P \, :=\, c^{-1}\, =\, \nicefrac pq$ of $P$ equals precisely
$\mu(L)$, see ~\cite{DDP09,1105.2415}.  We can now reformulate
Conjecture~\ref{conj:fujita} in the case of projective polarized toric
varieties.
\begin{conjecture}[Polyhedral Spectrum
  Conjecture]\label{conj:fujitapolyhedral}
  For any $d\in \N$ let $S_d$ be the set of $\Q$-codegrees of a
  lattice polytope corresponding to a polarized smooth toric
  variety. Then, for any $\eps>0$, the set $\{s\in S_d\mid s>\eps\}$
  is a finite set of rational numbers.
\end{conjecture}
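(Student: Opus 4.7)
The plan is to analyze the family of adjoint polytopes $\adjP[c]P$ at the critical parameter $c = 1/\qcd P$, i.e.\ the largest $c$ for which $\adjP[c]P$ is non-empty. I begin by introducing the \emph{core} of $P$, namely $\core P := P\cap\aff\adjP[c]P$ at this critical $c$. Its affine hull $\Acore$ is cut out by exactly those inequalities $\sprod{\va_i,\vx}\le b_i$ that become tight after the maximal shrinkage, and the core polytope records the essential combinatorial and lattice data governing how $P$ collapses onto a proper face.

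Second, I would recast Conjecture~\ref{conj:fujitapolyhedral} as a finiteness statement on cores. A rational $\Q$-codegree $p/q>\eps$ corresponds to a critical parameter $q/p<1/\eps$, so it suffices to bound, up to unimodular equivalence, the triple consisting of $\Acore$, the core $\core P$ inside it, and the primitive normals $\va_i$ of the collapsing facets. Each such triple determines $\qcd P$ rationally, hence finitely many triples yield finitely many values of the $\Q$-codegree above $\eps$.

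The third step is to exploit the smoothness of the polarized toric variety, which is equivalent to unimodularity of the normal fan of $P$. Unimodularity forces the primitive edge generators at every vertex cone to be a $\Z$-basis of $\Z^d$, a rigidity that translates into tight constraints on how the facets of $P$ approach $\Acore$: both the heights of the vertices above $\Acore$ and the codegree of $\core P$ are then controlled by $\eps$ and the dimension. A Hensley-type finiteness theorem for lattice polytopes of bounded codegree in bounded dimension confines $\core P$ to finitely many isomorphism types, after which the unimodular rigidity extends this to a bound on the full triple.

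The main obstacle is step three. Without a smoothness-like hypothesis, a lattice polytope of fixed codegree in fixed dimension forms infinite families, because the geometry transverse to a distinguished face can be arbitrarily stretched; smoothness (or more flexibly the $\alpha$-canonical condition of the abstract) is precisely what forbids this transverse stretching. Turning the local $\Z$-basis condition at each vertex cone into a genuine global finiteness statement — likely by induction on dimension, peeling off the core and bounding the \emph{mountain} over it — is where I expect the substantive technical difficulty, since the interplay between the lattice structure on $\Acore$ and the primitive normals $\va_i$ of the collapsing facets has to be controlled simultaneously.
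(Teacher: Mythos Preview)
Your setup is on the right track---the core and the set of ``collapsing'' facet normals are indeed the central objects---but the finiteness machinery is aimed at the wrong target, and this is a genuine gap rather than a mere technicality.

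You propose to bound the triple $(\text{affine hull}, \core P, \{\va_i\})$ by applying a Hensley-type finiteness result to the core polytope $\core P$. This does not work for two reasons. First, $\core P$ is only a \emph{rational} polytope, so lattice-polytope finiteness theorems do not apply to it directly. Second, and more fundamentally, there is no finiteness theorem for lattice polytopes of ``bounded codegree'': every $d$-dimensional lattice polytope has codegree at most $d+1$, so this condition is vacuous. Hensley and Lagarias--Ziegler give finiteness for lattice polytopes with a \emph{fixed number of interior lattice points}, which is a very different hypothesis, and there is no evident way to force $\core P$ into such a class.

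The paper's argument is in a sense dual to yours. Instead of working with $\core P$ in primal space, one forms the polytope $\Acore := \conv(\va_1,\ldots,\va_m)$ of core normals in the \emph{dual} lattice. One first shows that $\0$ lies in its relative interior and that the $\va_i$ are its vertices. The decisive step is then to prove that the $\alpha$-canonical condition forces $\relint(\alpha\Acore)\cap(\Z^d)^* = \{\0\}$: any other interior lattice point would lie in some maximal normal cone of $P$ with height $\ge\alpha$, and comparing this against its representation via the $\va_i$ (with total coefficient $<\alpha$) yields a contradiction by evaluating at a vertex of $P$ and at a point of $\core P$. Now Lagarias--Ziegler applies directly to $\Acore$, giving finitely many configurations $\setA=\{\va_1,\ldots,\va_m\}$ up to unimodular equivalence. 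Finally, for each fixed $\setA$, the integrality of the right-hand sides $b_i$ means $A\vxcore + c\1 \in \Z^m$ for any core point $\vxcore$; since the $\va_i$ positively span their linear hull, $\1$ is not in the column span $L_\setA$ of $A$, so only finitely many translates $L_\setA + c\1$ with $0<c\le\eps^{-1}$ meet $\Z^m$. This last step, which replaces your vague ``each triple determines $\qcd P$ rationally,'' is where the bound $\qcd P\ge\eps$ actually enters.

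In short: the smoothness (or $\alpha$-canonical) hypothesis controls the \emph{normals}, not the core polytope, and the Lagarias--Ziegler bound must be applied in the dual.
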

The purpose of this note is to show that this conjecture is even true
on the much larger class of lattice polytopes with $\alpha$-canonical
normal fan (Theorem~\ref{thm:spectrumcanonical}). This class includes
the set of lattice polytopes with $\Q$-Gorenstein normal fan of index
$r$ (for $\alpha=\nicefrac1r$), and in particular, for $\alpha=1$,
contains all smooth polytopes (see below for definitions).

A polytope $P$ with $\Q$-Gorenstein normal fan corresponds to a
polarized $\Q$-Gorenstein toric variety $(X,L)$ of index $r$,
\textit{i.e.}\ the integer $r\in \N$ is the minimal $r$ such that
$rK_X$ is a Cartier divisor. Thus, we prove
Conjecture~\ref{conj:fujita} for the class of $\Q$-Gorenstein
varieties of index $r$ (Corollary~\ref{cor:algebraic}), which contains
all smooth polytopes (for $r=1$).

\subsubsection*{Acknowledgments}
I learned about the classical spectrum conjecture of Fujita through
joint work with Sandra Di Rocco, Christian Haase, and Benjamin Nill
on~\cite{1105.2415}, and I would like to thank them for several
discussions on polyhedral adjunction theory. Christian Haase gave
valuable comments at an early stage of this work. Silke Horn and
Michael Joswig suggested several improvements for the exposition of
this paper.

\section{Basic Definitions}

Let $P$ be a $d$-dimensional lattice polytope given by an irredundant
set of primitive normal vectors $\va_i$, $1\le i\le n$ and
corresponding right hand sides $b_i$, $1\le i\le n$ as in
\eqref{eq:ineq}. Here, a set of normal vectors is \emph{irredundant}
if we cannot omit one of the inequalities $\sprod{\va_i,\vx}\le b_i$
without changing $P$, and the integral vector $\va_i$ is
\emph{primitive} if there is no other integral point strictly between
$\va_i$ and the origin, for any $1\le i\le n$. Up to a lattice
preserving transformation we can always assume that $P$ is
full-dimensional, so $P\subseteq\R^d$. Let $\Sigma_P\subseteq(\R^d)^*$
be the \emph{normal fan} of $P$. This is a complete rational
polyhedral fan. With $\Sigma_P(k)$ we denote the subset of $\Sigma_P$
of cones of dimension at most $k$ (the \emph{$k$-skeleton}). The set
of normals $\va_1, \ldots, \va_n$ is irredundant, so they generate
the rays in $\Sigma_P(1)$.  For a rational parameter $c\ge 0$ the
family of \emph{adjoint polytopes} $\adjP[c] P$ of $P$ is given by
\eqref{eq:adjoint}.
\begin{remark}
  Note that it is essential for the definition of the adjoint polytope
  that all inequalities $\sprod{\va_i,\vx}\le b_i$ define a facet of
  $P$ (\textit{i.e.}, the system is irredundant), and that all $\va_i$
  are primitive vectors, for $1\le i\le n$. For example, consider the
  triangle defined by
  \begin{align*}
    x\ &\ge\ 0 & y\ &\ge \ 0& 3x+y\ &\le \ 3\,.
  \end{align*}
  Then $\adjP P\ne \varnothing$ for $c\le \frac 35$ and empty
  otherwise. However, if we add the redundant inequality $x\le 1$,
  then $\adjP P=\varnothing$ for $c>\frac 12$. If we replace the last
  inequality by $6x+2y\le 6$, then $\adjP P$ is non-empty for any
  $c\le \frac 23$.
\end{remark}
For sufficiently large $c$ the adjoint polytopes $\adjP P$ are
empty. The \emph{$\Q$-codegree} $\qcd(P)$ of $P$ is the inverse of the
largest $c>0$ such that $\adjP P$ is non-empty, \textit{i.e.}
\begin{align*}
  \qcd(P)^{-1}\ :=\ \max\left(c\mid \adjP P\ne
    \varnothing\right)\ =\ \sup\left( c\mid \dim\, \adjP P = d
  \right)\,.
\end{align*}
See \cite[Def.~1.5 and Prop.~1.6]{1105.2415} for a proof that these
two definitions of the $\Q$-codegree coincide, and for more
background.  The \emph{core} of $P$ is
\begin{align*}
  \core\,P\ :=\ \adjP[\qcd(P)] P\,.
\end{align*}
The core of $P$ is a (rational, not full dimensional) polytope defined
by the (usually redundant set of) inequalities
$\sprod{\va_i,\vx}\,\le\, b_i-\qcd(P)$. A vector $\va_i$ is a
\emph{core normal} if
\begin{align*}
  \sprod{\va_i,\vy}\,&=\,b_i-\qcd(P)\ \text{ for all }\  \vy\in \core\,P\,.
\end{align*}
The set of all \emph{core normals} of $P$ will be denoted by
$\cf(P)$. It is subset of the primitive generators of the rays in
$\Sigma_P(1)$ (but they do not necessarily span a subfan of
$\Sigma_P$).  Up to relabeling we can assume that
$\cf(P)\,=\,\{\,\va_1, \ldots, \va_m\,\}$ for some $m\le n$.  The
\emph{affine hull} $\Caff(P)$ of $\core\, P$ is the smallest affine
space that contains $\core\, P$. This is the set of solutions of all
equations given by the core normals, \emph{i.e.}
\begin{align*}
  \Caff(P)\ &:=\ \left\{\vx\mid \sprod{\va_i, \vx}=b_i-\qcd(P)\,,\quad 1\le
  i\le m\right\}\,.
\end{align*}
\begin{example}\label{ex:twodim}
  Consider the polytope shown in Figure~\ref{fig:core} given by the
  five inequalities
  \begin{align*}
    -y\ &\le\ 0& -x\ &\le\ 0& x-y\ &\le\ 4\\
    y\ &\le\ 3& x\ &\le\ 5\,.\\
  \intertext{The core of $P$ is the segment  with
  vertices $(\nicefrac32,\nicefrac32)$ and $(\nicefrac72,\nicefrac32)$
  given by the inequalities}
    -y\ &\le\ -\nicefrac32\ = \ 0\,-\, \nicefrac32& -x\ &\le\ -\nicefrac32\ = \ 0\,-\, \nicefrac32
    &x-y\ &\le\ \phantom{-}\nicefrac52\ = \ 4\,-\, \nicefrac32\\
    y\ &\le\ \phantom{-}\nicefrac32\ = \ 3\,-\, \nicefrac32& x\ &\le\
    \phantom{-}\nicefrac72\ = \ 5\,-\, \nicefrac32\,.
  \end{align*}
  It is drawn with a thick line in the figure.  The inequalities in
  the left column actually span the affine hull of $\core\, P$, so
  $\cf(P)=\{(0,\pm1)\}$. The inequality in the right column is
  redundant. We have $\Caff=\{(x,y)\mid y=\nicefrac32\}$.
\end{example}
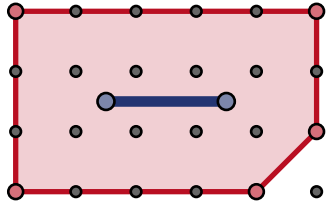
\begin{figure}[t]
  \centering
  \begin{tikzpicture}[scale=.4]
    \foreach \x in {0,...,10} { 
      \foreach \y in {0,...,6} { 
        \coordinate (a\x\y) at (\x,\y); 
      } 
    } 

    \tikzstyle{dot} = [draw,line width=1pt,black,fill=black!60] 
    \tikzstyle{vert} = [draw,line width=1pt,black,fill=col3!60] 
    \tikzstyle{cvert} = [draw,line width=1pt,black,fill=col4!60] 
    \tikzstyle{bdedge} = [draw,line width=2pt,color=col3,fill=col3!20] 
    \tikzstyle{inneredge} = [draw,line width=4pt,col4] 
    \draw[bdedge] (a00) -- (a80) -- (a102) -- (a106) --  (a06) -- cycle;
    \foreach \x in {0,2,4,6,8,10} { 
      \foreach \y in {0,2,4,6} { 
        \draw[dot] (a\x\y) circle (5pt);
      } 
    } 
    \foreach \x/\y in {0/0,0/6,8/0,10/2,10/6} {
      \draw[vert] (a\x\y) circle (7pt);
    }
    \draw[inneredge] (a33) -- (a73);
    \draw[cvert] (a33) circle (8pt);
    \draw[cvert] (a73) circle (8pt);
  \end{tikzpicture}
  \caption{The lattice polytope of example~\ref{ex:twodim}. Its core
    is the segment drawn with a thick line.}
\label{fig:core}
\end{figure}

Let $\sigma\subset (\R^d)^*$ be a a rational polyhedral cone with
primitive generators $\va_1, \ldots, \va_k$. The cone $\sigma$ is
called \emph{$\Q$-Gorenstein} of index $r$ if there is a primitive
vector $\vu$ such that $\sprod{\va_i, \vu}=r$ for $1\le i\le
k$. The cone is \emph{Gorenstein} if $r=1$. A complete rational
polyhedral fan $\Sigma$ is \emph{$\Q$-Gorenstein} of index $r$ if all
cones are $\Q$-Gorenstein and $r$ is the least common multiple of the
indices of all cones. The fan $\Sigma$ is \emph{Gorenstein} if $r=1$.

We define the height of a point $\vy\in \sigma$ as
\begin{align*}
  \height(\vy)\ :=\ \max\left(\;\sum_{i=1}^k\lambda_i\;\middle|\;
    \vy=\sum_{i=1}^k\lambda_i\va_i,\ \text{ and } \lambda_i\ge 0\,\
    \text{ for } 1\le i\le
    k\;\right)\,.
\end{align*}
The cone $\sigma$ is \emph{$\alpha$-canonical} for some $\alpha>0$ if
$\height(\vy)\ge \alpha$ for any integral point
$\vy\in\sigma\cap(\Z^d)^*$.  A complete rational polyhedral fan
$\Sigma$ is \emph{$\alpha$-canonical} if every cone in $\Sigma$ is. A
cone or fan is \emph{canonical} if it is $\alpha$-canonical for
$\alpha=1$. Note that $1$-dimensional cones are always canonical, so
$\alpha\le 1$ for all fans. Any $\Q$-Gorenstein cone or fan of index
$r$ is $\frac 1r$-canonical. We define
\begin{flalign*}
  &&\pclass(d)\ &:=\ \left\{\; P\; \mid\; P \text{ is a
      $d$-dimensional lattice polytope}\;\right\}\,\\
  \text{and}&&\hfill\pccan(d)\ &:=\ \left\{\; P\; \mid\; P \,\in\,\pclass(d) \text{ with
      $\alpha$-canonical normal fan}\;\right\}\,.&&\hfill
\end{flalign*}
A lattice polytope is \emph{smooth} if the primitive generators of any
cone in its normal fan are a subset of a lattice basis. So, in
particular, the normal fan of a smooth polytope is Gorenstein (with
index $1$) and canonical. 

By the toric dictionary, a polarized toric variety $(X,L)$ is
non-singular if and only if the associated lattice polytope is smooth
(see \emph{e.g.}~\cite{Ful93}). The polarized variety $(X,L)$ is
$\Q$-Gorenstein if $rK_X$ is a Cartier divisor for some integer
$r$. The minimal $r$ such that this holds is the \emph{index} of the
polarized toric variety. Again, $(X,L)$ is $\Q$-Gorenstein of index
$r$ if the normal fan of the associated polytope is $\Q$-Gorenstein of
index $r$.

\section{The $\Q$-Codegree Spectrum}
\label{sec:spectrum}

The purpose of this note is to study the set of lattice polytopes with
$\alpha$-canonical normal fan whose $\Q$-codegree is bounded from
below. For $\alpha, \eps>0$ and $d\in\Z_{>0}$ we define
\begin{flalign*}
  &&\spectrum(d,\eps)\ &:=\ \left\{\; P\; \mid \; P\in \pclass(d)\text{
      and } \qcd(P)\ge \eps\; \right\}\\
  \text{and}&\qquad&\speccan(d,\eps)\ &:=\ \left\{\; P\; \mid \; P\in
    \pccan(d) \text{ and } \qcd(P)\ge \eps\;
  \right\}\ =\
    \spectrum(d,\eps)\;\cap\; \pccan(d)\;.
\end{flalign*}
For $\alpha=1$, this set contains all smooth lattice polytopes, and,
for $\alpha=\nicefrac1r$, all lattice polytopes with $\Q$-Gorenstein
normal fan of index $r$ and $\Q$-codegree bounded from below by
$\eps$.  Our main theorem is the following.
\begin{theorem}\label{thm:spectrumcanonical}
  Let $d\in \N$ and $\alpha, \eps\ge 0$ be given. Then
  \begin{align*}
    \left\{\,\qcd(P)\,\mid \, P\in\speccan(d,\eps)\,\right\}
  \end{align*}
  is finite.
\end{theorem}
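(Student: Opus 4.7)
The plan is to bound, uniformly in $d$, $\alpha$, and $\eps$, both the value of $\qcd(P)$ and the denominator of its reduced fraction $\qcd(P)=p/q$ for $P\in\speccan(d,\eps)$; a bounded set of positive rationals with uniformly bounded denominators is necessarily finite. Throughout, set $c^{*}:=\qcd(P)^{-1}$, so that $\core P=\adjP[c^{*}] P$, and the hypothesis $\qcd(P)\ge\eps$ translates to $c^{*}\le 1/\eps$.

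First I would establish a universal lower bound $c^{*}\ge t(d,\alpha)>0$, equivalently $\qcd(P)\le 1/t$. At any vertex $\vv$ of $P$, the primitive outer normals span a full-dimensional $\alpha$-canonical cone $\sigma_{\vv}$ of $\Sigma_P$. Picking a vector $\vu$ in the relative interior of the dual cone such that $\sprod{\va_i,\vu}\ge 1$ for every primitive generator $\va_i$ of $\sigma_{\vv}$, a short calculation based on the $\alpha$-canonical inequality forces $\vv-t\vu$ to have depth at least $t$ in $P$ for a suitable $t=t(d,\alpha)$. Hence $\adjP[t] P\ne\varnothing$, giving $c^{*}\ge t$.

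The main step, and the principal obstacle, is to bound the denominator $p$ of $c^{*}=q/p$ (in lowest terms) in terms of $d$ and $\alpha$. By definition of the core normals, every $\vy\in\Caff(P)$ satisfies $c^{*}=b_i-\sprod{\va_i,\vy}$ for each $\va_i\in\cf(P)$. Fixing a maximal linearly independent subfamily $\va_{i_1},\ldots,\va_{i_k}\in\cf(P)$ with $k=\rank\cf(P)=d-\dim\core P$, a Cramer-type argument applied to the system $\sprod{\va_{i_j},\vy}=b_{i_j}-c^{*}$ shows that $p$ is controlled by the determinant of a $k\times k$ integer matrix built from $\va_{i_1},\ldots,\va_{i_k}$. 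To bound such determinants uniformly I would establish a Hensley-type lemma: any $k$ primitive lattice vectors whose conic hull is an $\alpha$-canonical cone span a sublattice of index at most $D(d,\alpha)$. Since the core normals need not themselves form a cone of $\Sigma_P$, as the paper explicitly notes, this lemma must first be applied after enclosing them in a suitable $\alpha$-canonical subcone derived from the fan --- for instance via a refinement argument or by induction on $\dim\core P$. This reduction from \emph{core normals} to \emph{cone of the fan} is the step I expect to be most delicate. Once the Hensley-type bound is in place, $p\le D(d,\alpha)$, and combined with $t\le c^{*}\le 1/\eps$ the possible values of $c^{*}$, and hence of $\qcd(P)$, form a finite set.
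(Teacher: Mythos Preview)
Your plan has a genuine gap at the heart of the denominator bound. Taking a \emph{maximal linearly independent} subfamily $\va_{i_1},\ldots,\va_{i_k}$ of the core normals and writing down the $k$ equations $\sprod{\va_{i_j},\vy}=b_{i_j}-c^{*}$ imposes no constraint whatsoever on $c^{*}$: for every value of $c^{*}$ this system is consistent (indeed underdetermined, since $k\le d$ and $\vy\in\R^d$ is not required to be integral), so there is nothing for a Cramer-type argument to bite on. What actually pins down $c^{*}$ is the \emph{redundancy} among the core normals: by Lemma~\ref{lem:zeroInInterior} the origin lies in the relative interior of $\Acore=\conv(\va_1,\ldots,\va_m)$, so the $\va_i$ satisfy a positive linear dependence, and it is precisely the extra equation coming from that dependence that forces $c^{*}$ into a discrete set. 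Your outline never invokes this dependence.

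Separately, the Hensley-type lemma you propose is false as stated. The cone in $(\R^2)^*$ generated by $(1,0)$ and $(1,N)$ is Gorenstein (every nonzero lattice point $(a,b)$ in it has height $a\ge 1$), yet the sublattice spanned by the two generators has index $N$. Hence there is no uniform bound $D(d,\alpha)$ on the sublattice index of the generators of an $\alpha$-canonical cone, and the ``reduction from core normals to cone of the fan'' you flag as delicate cannot succeed along these lines.

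The paper proceeds differently. It shows (Lemma~\ref{lem:spectrumcanonical}) that the $\alpha$-canonical hypothesis forces $\relint(\alpha\Acore)\cap(\Z^d)^*=\{\0\}$; Lagarias--Ziegler then leaves only finitely many configurations $\setA=\cf(P)$ up to lattice equivalence. For each fixed $\setA$, the positive dependence of the $\va_i$ gives $\1\notin L_{\setA}$ (the column span of the matrix $A$ with rows $\va_i$), so the affine subspaces $L_{\setA}+c\1\subset\R^m$ meet $\Z^m$ only for $c$ in a fixed arithmetic progression, and only finitely many such $c$ lie in $(0,\eps^{-1}]$. No lower bound on $c^{*}$ is needed.
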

In other words, in any fixed dimension and for any fixed $\alpha$, the
set of values the $\Q$-codegree assumes above any positive threshold
is finite in the class of polytopes with $\alpha$-canonical normal
fan. We obtain the following corollary.
\begin{corollary}
  Conjecture~\ref{conj:fujitapolyhedral} holds for $d$-dimensional
  lattice polytopes with $\alpha$-canonical normal fan, for any given
  $\alpha>0$.
\end{corollary}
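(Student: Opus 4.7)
The plan is to write $\qcd(P)$ as a fraction $M/K$ of positive integers, bound $M$ by a constant depending only on $d$ and $\alpha$, and use $\qcd(P)\ge\eps$ to bound $K$ correspondingly; finiteness then follows because only finitely many such fractions exist in $[\eps,\infty)$.

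First I would interpret $c_{\max}:=\qcd(P)^{-1}$ as the optimum of the linear program
\[
  \max\left\{\,c\;:\;\sprod{\va_i,\vx}\le b_i-c,\ 1\le i\le n\,\right\}.
\]
Its dual is $\min\left\{\sum_i\lambda_i b_i\ :\ \sum_i\lambda_i\va_i=\0,\ \sum_i\lambda_i=1,\ \lambda_i\ge 0\right\}$, and, choosing $\vxcore$ in the relative interior of $\core P$ together with a dual optimum in the relative interior of the dual optimal face, complementary slackness forces the positive support of $\lambda$ to be exactly the set of core normals $\cf(P)=\{\va_1,\ldots,\va_m\}$. Clearing denominators yields coprime $\mu_1,\ldots,\mu_m\in\Z_{>0}$ with $\sum_i\mu_i\va_i=\0$, and evaluating at $\vxcore$ produces
\[
  \qcd(P)\;=\;\frac{M}{K},\qquad M:=\sum_{i=1}^m\mu_i,\quad K:=\sum_{i=1}^m\mu_i b_i\in\Z_{>0}.
\]

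The main work is to bound $M$ by some $M_0(d,\alpha)$. The positive dependency $\sum\mu_i\va_i=\0$ prevents all of $\cf(P)$ from sitting in one pointed cone of $\Sigma_P$; nonetheless, each vertex of $\core P$ in $\Caff(P)$ singles out a cone $\sigma\in\Sigma_P$ generated by the non-core normals tight at that vertex, and one can partition the index set $\{1,\ldots,m\}$ along the face structure of $\core P$ so that each partial sum $\vz_\sigma:=\sum_{i\in I_\sigma}\mu_i\va_i$ lies inside a single cone $\sigma$ and is an integer vector, with $\sum_\sigma\vz_\sigma=\0$. On each $\sigma$, the $\alpha$-canonical condition $\height(\vy)\ge\alpha$ for $\vy\in\sigma\cap(\Z^d)^*$ provides uniform control over the multiplicities that can appear in positive integer decompositions along the rays of $\sigma$ in terms of a norm of $\vz_\sigma$; summing the local contributions over the finitely many cones involved is expected to produce the desired bound on $M$.

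Once $M\le M_0(d,\alpha)$ is established, the hypothesis $\qcd(P)=M/K\ge\eps$ forces $K\le M_0(d,\alpha)/\eps$, so $(M,K)$ ranges over a finite set of positive integer pairs and $M/K$ takes only finitely many values. The principal obstacle is the bound on $M$: the positive relation among the core normals is a global affine statement, whereas $\alpha$-canonicality is formulated cone-by-cone, so the substantive technical work lies in choosing the correct cone-decomposition of $\cf(P)$ and converting the resulting $\height$ estimates into a numerical bound. I expect the mountain/height machinery suggested by the macros introduced above to be exactly what makes this step go through.
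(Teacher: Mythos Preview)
Your LP-duality computation is correct and gives a clean formula $\qcd(P)=M/K$ with $K\in\Z_{>0}$; the reduction ``bound $M$, then bound $K$'' is a sound strategy.  The gap is entirely in the step you yourself flag as the principal obstacle, namely bounding $M$, and the sketch you give does not work.

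First, the proposed partition of the core normals ``along the face structure of $\core P$'' is ill-defined: by definition every core normal $\va_i$ ($1\le i\le m$) is tight on \emph{all} of $\core P$, so the face structure of $\core P$ carries no information that distinguishes one core normal from another.  The cones $\sigma\in\Sigma_P$ that you attach to vertices of $\core P$ are generated by \emph{non-core} normals $\va_j$ with $j>m$; there is no reason a partial sum $\vz_\sigma=\sum_{i\in I_\sigma}\mu_i\va_i$ of core normals should land in such a $\sigma$.  Second, even granting some decomposition with $\vz_\sigma\in\sigma$, the $\alpha$-canonical hypothesis is an inequality in the wrong direction for your purpose: it says $\height(\vz_\sigma)\ge\alpha$, a \emph{lower} bound on the coefficients of $\vz_\sigma$ in terms of the ray generators of $\sigma$.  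It gives no upper bound on the $\mu_i$, which are coefficients with respect to a completely different set of vectors (the core normals, not the generators of $\sigma$).  So the local height estimate does not translate into any control of $\sum_{i\in I_\sigma}\mu_i$.

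The paper takes a different route that avoids bounding $M$ directly.  It first shows that the polytope $\Acore=\conv(\va_1,\ldots,\va_m)$ contains the origin in its relative interior, has the $\va_i$ as vertices, and---this is where $\alpha$-canonicality enters---satisfies $\relint(\alpha\Acore)\cap(\Z^d)^*=\{\0\}$.  The proof of this last fact compares two representations of a hypothetical nonzero interior lattice point $\va$: one with total weight $<\alpha$ coming from $\va\in\relint(\alpha\Acore)$, and one with total weight $\ge\alpha$ coming from $\va$ lying in some maximal cone of the $\alpha$-canonical fan $\Sigma_P$; evaluating both at a vertex of $P$ and at a point of $\core P$ yields a contradiction.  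Lagarias--Ziegler then gives only finitely many possible configurations $\setA=\cf(P)$ up to lattice equivalence.  Finally, for each fixed $\setA$ the integrality condition $A\vxcore+c\1\in\Z^m$ forces $c$ to lie in a discrete set of translates of a proper subspace, hence only finitely many values in $(0,\eps^{-1}]$.  Note that the finiteness of configurations $\setA$ \emph{does} a posteriori bound your $M$ (choose for each $\setA$ a fixed primitive positive relation), so your overall strategy can be rescued---but only by replacing your cone-decomposition argument with the $\Acore$/Lagarias--Ziegler step.
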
 
Using the correspondence between combinatorial and toric geometry we
can translate this into a statement about polarized toric varieties.
\begin{corollary}\label{cor:algebraic}
  Conjecture~\ref{conj:fujita} holds for $d$-dimensional polarized
  $\Q$-Gorenstein toric varieties of index $r$, for any integer $r>0$.
\end{corollary}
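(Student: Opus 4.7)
The plan is to reduce Corollary \ref{cor:algebraic} directly to Theorem \ref{thm:spectrumcanonical} via the toric dictionary, without any further geometric work. The two essential ingredients have already been collected in the introduction and \S2 of the paper; the proof is an assembly of them.

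First, I would invoke the equivalence of categories between polarized projective toric varieties $(X,L)$ and lattice polytopes $P$: to a $d$-dimensional polarized $\Q$-Gorenstein toric variety $(X,L)$ of index $r$ is associated a $d$-dimensional lattice polytope $P$ whose normal fan $\Sigma_P$ is the fan of $X$. By definition of the index, $rK_X$ is Cartier and $r$ is minimal with this property, which translates to $\Sigma_P$ being $\Q$-Gorenstein of index $r$ (as recorded at the end of \S2). Next, I would use the identification $\mu(L) = \qcd(P)$ recalled in the introduction (following \cite{DDP09,1105.2415}), so that the set of spectral values of $d$-dimensional polarized $\Q$-Gorenstein toric varieties of index $r$ above a threshold $\eps>0$ coincides with the set of $\Q$-codegrees of $d$-dimensional lattice polytopes with $\Q$-Gorenstein normal fan of index $r$ and $\qcd(P)\ge \eps$.

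Second, I would note the containment of classes: by the remark in \S2 that any $\Q$-Gorenstein cone or fan of index $r$ is $\tfrac{1}{r}$-canonical, every such polytope $P$ lies in $\pccan[1/r](d)$, hence in $\speccan[1/r](d,\eps)$. Therefore the set of spectral values we are trying to control is a subset of
\begin{align*}
  \left\{\,\qcd(P)\,\mid\, P\in \speccan[1/r](d,\eps)\,\right\}.
\end{align*}
By Theorem \ref{thm:spectrumcanonical} applied with $\alpha = 1/r$, this latter set is finite; a fortiori the subset consisting of spectral values of polarized $\Q$-Gorenstein toric varieties of index $r$ is finite. Rationality of the elements is automatic because each $\qcd(P) = p/q$ with $p,q\in \Z_{>0}$ by construction.

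There is essentially no obstacle here beyond correctly citing the dictionary: the only mild point is making sure that the translation $\mu(L)=\qcd(P)$ is stated in the generality of $\Q$-Gorenstein (not just smooth) toric polarizations, and that the index of the variety matches the index of the normal fan; both are standard and already referenced in the paper. The substantive content of the corollary is fully carried by Theorem \ref{thm:spectrumcanonical}.
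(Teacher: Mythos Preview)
Your proposal is correct and follows exactly the route the paper takes: the paper states this corollary without an explicit proof, preceded only by the remark that ``using the correspondence between combinatorial and toric geometry we can translate this into a statement about polarized toric varieties,'' so the content is precisely the toric-dictionary reduction (polytope $\leftrightarrow$ polarized variety, $\qcd(P)=\mu(L)$, $\Q$-Gorenstein of index $r$ $\Rightarrow$ $\tfrac1r$-canonical) followed by an application of Theorem~\ref{thm:spectrumcanonical} with $\alpha=\tfrac1r$. Your write-up simply spells out these steps in more detail than the paper does.
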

This proves a generalization of the Conjecture~\ref{conj:fujita} in
the toric case. For smooth two and three dimensional polarized
varieties this has previously been proved by Fujita~\cite{Fuj96}.  
\begin{remark}
  The following family of examples shows that we cannot expect the
  conjecture to be true without any further assumptions. For positive
  integers $a\in \Z_{>0}$ consider the polytopes
\begin{align*}
  \Delta_d(a)\ :=\ \conv(\0, a\ve_1, \ve_2, \ldots, \ve_d)\,.
\end{align*}
They have $\Q$-codegree $\qcd(\Delta_d(a))\,=\, d-1+\frac 2a$
arbitrarily close to $d-1$, and the normal fan of $\Delta(a)$ is
$\Q$-Gorenstein with index $a$. Hence, the theorem cannot hold without
restrictions to the normal fan unless $\eps>d-1$.
\end{remark}
We prove Theorem~\ref{thm:spectrumcanonical} by a series of Lemmas.
The proof has two main steps. We first show that there are, up to
lattice equivalence and for fixed $\alpha>0$ and $d\in\Z_{>0}$, only
finitely many sets of core normals for polytopes in the class
$\pccan(d)$ (Corollary~\ref{cor:spectrumcanonical}) by reducing this
to a finiteness result of Lagarias and Ziegler
(Theorem~\ref{thm:lagariasZiegler}).  In a second step we show that
each such configuration gives rise to a finite number of different
$\Q$-codegrees above any positive threshold
(Lemma~\ref{prop:spectrumcanonical}).

We start by studying the set of normal vectors that define the core of
a lattice polytope. Let $P$ be a $d$-dimensional lattice polytope with
$\alpha$-canonical normal fan given by
\begin{align*}
  P\ =\ \left\{\; \vx\,\mid\, \sprod{\va_i,\vx}\ \le\ b_i\,\quad 1\le
    i\le n\; \right\}
\end{align*}
with an irredundant set of primitive normals $\va_i$. Let $C=\core\,P$
be the core of $P$ with affine hull $\Caff=\aff\, C$ and $c^{-1}$ the
$\Q$-codegree of $P$. Up to relabeling we can assume that $\va_1,
\ldots, \va_m$ for some $m\le n$ is the (not irredundant) set of facet
normals defining $\Caff$, \textit{i.e.},
\begin{align*}
  \Caff\ =\ \{\,\vx\mid \sprod{\va_i,\vx}\;=\; b_i-c\,,\quad 1\le i\le m\,\}
\end{align*}
and no other $\va_i$ is constant on $\Caff$.  We define a new polytope
$\mountain\,P$ via
\begin{align*}
  \mountain(P)\ =\ \left\{\,(\vx,\lambda)\,\mid \,
    \sprod{\va_i,\vx}+\lambda\le b_i\quad \text{ for } 1\le i\le n\;
    \quad \text{ and }\quad \lambda\ge 0\,\right\}\,.
\end{align*}
See Figure~\ref{fig:mountain} for the mountain of
Example~\ref{ex:twodim}.
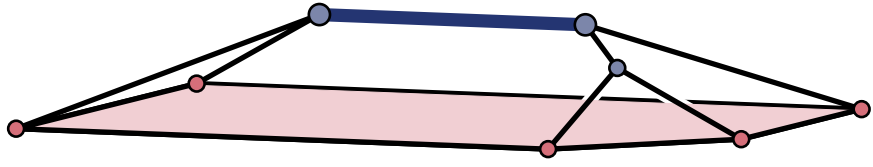
\begin{figure}[t]
  \centering
  \begin{tikzpicture}[perspective nqgs,scale=.5]
    \foreach \x in {0,...,10} { 
      \foreach \y in {0,...,6} { 
        \coordinate (a\x\y) at (\y,0,\x); 
      } 
    } 
    \coordinate (b33) at (3,1.5,3);
    \coordinate (b53) at (3,1.5,7);
    \coordinate (b64) at (2,1,8);

    \coordinate (a06a) at (5.8,0,0);
    \coordinate (a106a) at (5.8,0,10);

    \tikzstyle{dot} = [draw,line width=1pt,black,fill=col2!60] 
    \tikzstyle{vert} = [draw,line width=1pt,black,fill=col3!60] 
    \tikzstyle{cvert} = [draw,line width=1pt,black,fill=col4!60] 
    \tikzstyle{bdfilledge} = [draw,line width=0pt,color=col3!20,fill=col3!20,join=round] 
    \tikzstyle{bdbedge} = [draw,line width=2pt,color=black,join=round] 
    \tikzstyle{bdwfilledge} = [draw,line width=2pt,black,join=round] 
    \tikzstyle{bdedge} = [draw,line width=2pt,black,join=round] 
    \tikzstyle{bdwedge} = [draw,line width=5pt,white,join=round] 
    \tikzstyle{inneredge} = [draw,line width=5pt,col4] 
    \draw[bdwfilledge] (a00) -- (a80) -- (a102) -- (a106) --  (a06) -- cycle;
    \draw[bdwedge] (a80) -- (b64) -- (a102);

    \fill[bdfilledge] (a00) -- (a80) -- (a102) -- (a106a) --  (a06a) -- cycle;
    \draw[bdbedge] (a06) -- (a00) -- (a80) -- (a102) -- (a106);

    \draw[bdedge] (a80) -- (b64) -- (a102);

    \draw[bdedge] (b64) -- (b53) -- (a106);
    \draw[bdedge] (a00) -- (b33) -- (a06);
    \foreach \x/\y in {0/0,0/6,8/0,10/2,10/6} {
      \draw[vert] (a\x\y) circle (6pt);
    }

    \draw[inneredge] (b33) -- (b53);
    \draw[cvert] (b33) circle (8pt);
    \draw[cvert] (b53) circle (8pt);    
    \draw[cvert] (b64) circle (6pt);    
  \end{tikzpicture}
  \caption{The mountain of the polytope in Figure~\ref{fig:core}. The
    top (thick) face is the core of $P$, up to a projection.  \label{fig:mountain}}
\end{figure}
Up to a projection we can recover all adjoint polytopes by
intersecting the mountain with the hyperplane $\lambda=c$ for some
$c>0$, \textit{i.e.},
\begin{align*}
  \adjP P\;\times\;\{c\}\ =\ \mountain\, P\; \cap\;
  \{(\vx,\lambda)\mid \lambda=c\}\,.
\end{align*}
Let $F$ be the face of $\mountain\,P$ defined by the inequality
$\lambda\le c$ for some (appropriately chosen) $c\in \R$. Then
$\core\,P$ is the projection of $F$ onto the first $d$ coordinates,
and the $\Q$-codegree of $P$ is the inverse $\nicefrac 1c$ of the
height of this face over the base $P\times\{0\}$.

Let $\Acore:=\conv(\va_1, \ldots, \va_m)$ be the convex hull of the
core normals. The following two lemmas show that all $\va_i$ are
vertices and $\0$ is a relatively interior point of $\Acore$
(\textit{i.e.}, the vectors $\va_1, \ldots, \va_m$ positively span the
linear space they generate). See
Figure~\ref{fig:acoreandlinspan}\subref{fig:acore}. The first
lemma has been proved in~\cite{1105.2415}.
\begin{lemma}[{Di Rocco et al. 2011~\cite[Lemma 2.2]{1105.2415}}]\label{lem:zeroInInterior}
  The origin is in the relative interior of $\Acore$.
\end{lemma}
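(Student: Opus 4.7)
The plan is to extract the statement from the geometry of the mountain $\mountain(P)\subseteq\R^{d+1}$, using the fact that $\core(P)$ sits at its very top. Since the horizontal slice $\mountain(P)\cap\{\lambda=c^*\}$ is (the graph of) $\adjP[c^*]P$, and $c:=1/\qcd(P)$ is the largest value of $c^*$ for which this slice is non-empty, the face
\begin{align*}
  F\;:=\;\mountain(P)\cap\{\lambda=c\}
\end{align*}
of $\mountain(P)$ coincides with $\core(P)$ up to the projection on the first $d$ coordinates. By construction $F$ is precisely the face of $\mountain(P)$ on which the linear functional $(\0,1)$ attains its maximum.

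Next I would read off the normal cone $N_F$ in $\R^{d+1}$. The inequality $\lambda\ge 0$ is never tight on $F$ because $c>0$; an inequality $\sprod{\va_i,\vx}+\lambda\le b_i$ is tight on all of $F$ iff $\sprod{\va_i,\vy}=b_i-c$ for every $\vy\in\core(P)$, \textit{i.e.}\ iff $\va_i\in\cf(P)$. Hence
\begin{align*}
  N_F\;=\;\cone\{(\va_i,1)\,:\,1\le i\le m\}\,,
\end{align*}
which is literally the cone (with apex at $\0$) over the slice $\Acore\times\{1\}=N_F\cap\{\lambda=1\}$.

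The key step is then the standard face--normal cone dictionary: $\relint(N_F)$ equals the set of linear functionals whose argmax on $\mountain(P)$ is the whole of $F$. Since by construction $F$ is the complete argmax of $(\0,1)$ on $\mountain(P)$, this yields $(\0,1)\in\relint(N_F)$. Because $N_F$ is the cone with apex $\0$ over $\Acore\times\{1\}$, intersecting with the affine hyperplane $\{\lambda=1\}$ commutes with taking relative interior, and one obtains $\relint(N_F)\cap\{\lambda=1\}=\relint(\Acore\times\{1\})$; the desired conclusion $\0\in\relint(\Acore)$ follows at once.

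The only genuinely non-routine point is the appeal to the face--normal cone dictionary in the form ``$(\0,1)\in\relint(N_F)$''; this requires verifying that $F$ equals, and is not merely contained in, the argmax of $(\0,1)$ on $\mountain(P)$, which is immediate from the definition of $F$ as the entire level set $\mountain(P)\cap\{\lambda=c\}$ and the maximality of $c$. Everything else reduces to unpacking the definitions of $\mountain(P)$ and $\core(P)$ and to the elementary fact that slicing a pointed cone by a hyperplane containing all of its generators returns the cone's bounded base and commutes with taking relative interior.
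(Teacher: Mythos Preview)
Your proof is correct and follows essentially the same route as the paper's: both pass to the mountain $\mountain(P)$, identify the normal cone of the top face $F$ as $\cone\{(\va_i,1):1\le i\le m\}$, observe that $(\0,1)$ lies in its relative interior, and then read off $\0\in\relint(\Acore)$ from the slice at height $1$. Your treatment of the last step (intersecting $\relint(N_F)$ with $\{\lambda=1\}$) is slightly more careful than the paper's, which jumps directly to a positive linear combination, but the argument is the same in substance.
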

\begin{proof}
  We consider the mountain of $P$. The vector $(\0,1)$ defining the
  face $F$ projecting onto the core of $P$ as above is in the relative
  interior of the normal cone of $F$, and the generators of this
  normal cone are the normals defining $\Caff$.

  By construction, this normal cone is spanned by the vectors
  $(\va_i,1)$ for $1\le i\le m$, so there are non-zero, non-negative
  coefficients $\eta_1, \ldots, \eta_m$ such that
  \begin{align*}
    (\0,1)\ =\ \sum_{i=1}^m\eta_i(\va_i,1)\,.
  \end{align*}
  This reduces to a positive linear combination of $\0$ in the
  $\va_i$, $1\le i\le m$, which proves that $\0\in \relint\,\Acore$.
\end{proof}
\begin{lemma}\label{lem:vertices}
  The vertices of $\Acore$ are $\va_1, \ldots, \va_m$.
\end{lemma}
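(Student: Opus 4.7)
The plan is to prove this by contradiction, exploiting the irredundancy of the defining inequalities of $P$. Suppose some $\va_j$ with $1\le j\le m$ is not a vertex of $\Acore$. Since $\Acore$ is the convex hull of $\va_1,\ldots,\va_m$, this means $\va_j$ lies in the convex hull of the remaining core normals, so there exist $\lambda_i\ge 0$ for $i\ne j$ with $\sum_{i\ne j}\lambda_i=1$ such that
\begin{align*}
  \va_j\ =\ \sum_{i\ne j}\lambda_i\va_i\,.
\end{align*}

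Next I would exploit the defining property of the core normals: every $\va_i$ with $1\le i\le m$ satisfies $\sprod{\va_i,\vy}=b_i-c$ for every $\vy\in\core\,P$ (where $c=\qcd(P)^{-1}$), and $\core\,P$ is non-empty by construction. Pairing the displayed identity with any $\vy\in\core\,P$ and using $\sum\lambda_i=1$ yields $b_j-c=\sum_{i\ne j}\lambda_i(b_i-c)=\sum_{i\ne j}\lambda_i b_i-c$, hence $b_j=\sum_{i\ne j}\lambda_i b_i$.

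Now for any $\vx\in P$, the inequalities $\sprod{\va_i,\vx}\le b_i$ together with $\lambda_i\ge 0$ give
\begin{align*}
  \sprod{\va_j,\vx}\ =\ \sum_{i\ne j}\lambda_i\sprod{\va_i,\vx}\ \le\ \sum_{i\ne j}\lambda_i b_i\ =\ b_j\,.
\end{align*}
So the inequality $\sprod{\va_j,\vx}\le b_j$ is a consequence of the inequalities indexed by $i\ne j$, contradicting the assumption that the representation of $P$ is irredundant. (If two $\va_i$ with $i\le m$ pointed in the same direction they would coincide as primitive vectors, and the same irredundancy argument would already force one of the two parallel inequalities to drop, so we may assume the $\va_i$ are pairwise distinct.)

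The argument is short and the main point to get right is the step from the convex combination $\va_j=\sum\lambda_i\va_i$ to the corresponding linear relation $b_j=\sum\lambda_i b_i$ on the right-hand sides; this is exactly where the fact that $\va_1,\ldots,\va_m$ are core normals (and not merely facet normals) is used, since the common equality on $\Caff\ne\varnothing$ is what forces the $b_i-c$ to obey the same affine dependence as the $\va_i$.
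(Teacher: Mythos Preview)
Your proof is correct and follows essentially the same approach as the paper: assume some $\va_j$ is a convex combination of the others, use the common equalities on $\core\,P$ to transfer the affine relation to the right-hand sides ($b_j=\sum_{i\ne j}\lambda_i b_i$), and then contradict irredundancy. The only cosmetic difference is in the final step: the paper picks a point $\vy$ in the relative interior of the facet defined by $\va_j$ (so $\sprod{\va_j,\vy}=b_j$ while $\sprod{\va_i,\vy}<b_i$ for $i\ne j$) to produce the strict chain $b_j<b_j$, whereas you argue directly that the $j$-th inequality is implied by the others and hence redundant; these are two phrasings of the same contradiction.
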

\begin{proof}
  Assume on the contrary that this is not the case. Then one of the
  $\va_j$, $1\le j\le m$ is a convex combination of the others. We can
  assume that this is $\va_m$. So there are $\eta_1, \ldots,
  \eta_{m-1}\ge 0$ such that $\va_m=\sum\eta_i\va_i$ and $\sum\eta_i=
  1$.  Let $\vxcore$ be a relative interior point of $\core(P)$. Then
  $\sprod{\va_j,\vxcore}\,=\, b_j-c$ for $1\le j\le m$, so we compute
  \begin{align*}
    -c+b_m\ &=\ \sprod{\va_m, \vxcore}\ =\ \textstyle\sum\eta_i\sprod{\va_i, \vxcore}\\
    &=\ \textstyle\sum\eta_ib_i-q\textstyle\sum\eta_i\ =\ -c+
    \textstyle\sum\eta_ib_i\,.
  \end{align*}
  Hence, $\sum\eta_ib_i=b_m$.  On the other hand, by irredundancy of
  the $\va_i$ as facet normals of $P$ there is $\vy$ (relative
  interior to the facet defined by $\va_m$) such that $\sprod{\va_m,
    \vy}\,=\, b_m$, but $\sprod{\va_i, \vy}\,<\, b_i$ for $1\le i\le
  m-1$ (in fact, also for $m+1\le i\le n$). Hence, we can continue
  with
  \begin{align*}
    \sum\eta_ib_i\ =\ b_m\ =\ \sprod{\va_m, \vy}\ =\
    \sum\eta_i\sprod{\va_i, \vy}\ <\ \sum\eta_ib_i\,.
  \end{align*}
  This is clearly a contradiction, so $\va_m$ is not a convex
  combination of the other $\va_j$.
\end{proof}
\begin{figure}[t]
  \centering
  \subfigure[The polytope $\Acore$ in $(\R^2)^*$\label{fig:acore}]{%
    \begin{tikzpicture}[scale=.65]
      \tikzstyle{dot} = [draw,line width=1pt,black,fill=col2!60] 
      \foreach \x in {-1,...,3} { 
        \foreach \y in {-1,...,3} { 
          \coordinate (a\x\y) at (\x,\y); 
          \draw[dot] (a\x\y) circle (4pt);
        } 
      } 
    \tikzstyle{dot} = [draw,line width=1pt,black,fill=col2!60] 
    \tikzstyle{vert} = [draw,line width=1pt,black,fill=col3!60] 
    \tikzstyle{bdedge} = [draw,line width=3pt,black,join=round] 
    \draw[bdedge] (a10) -- (a12);
    \draw[dot] (a11) circle (4pt);
    \foreach \x/\y in {1/0,1/2} {
      \draw[vert] (a\x\y) circle (5pt);
    }
      
    \draw (-3,0) circle (0pt);
    \draw (4,0) circle (0pt);
    \end{tikzpicture}\hspace*{.5cm}
  } \subfigure[The linear space $L_\setA$ and some translates in
  direction $\1$ containing integral points\label{fig:linspan}]{%
    \begin{tikzpicture}[scale=.5]
      \tikzstyle{dot} = [draw,line width=1pt,black,fill=col2!60] 
      \foreach \x in {-3,...,5} { 
        \foreach \y in {-3,...,4} { 
          \coordinate (a\x\y) at (\x,\y); 
        } 
      } 
    \tikzstyle{dot} = [draw,line width=1pt,black,fill=col2!60] 
    \tikzstyle{vert} = [draw,line width=1pt,black,fill=col3!60] 
    \tikzstyle{bdedge} = [draw,line width=2pt,black,join=round] 
    \tikzstyle{bdedgeg} = [draw,line width=1pt,gray,join=round] 
    \tikzstyle{axis} = [draw,line width=1pt,gray,->,join=round] 
    \tikzstyle{vec} = [draw,line width=2pt,col1,->] 

    \draw[axis] (a0-3) -- (a04);
    \draw[axis] (a-30) -- (a40);
    \draw[bdedge] (a3-3) -- (a-33);
    \draw[bdedgeg] (a3-2) -- (a-23);
    \draw[bdedgeg] (a3-2) -- (a-23);
    \draw[bdedgeg] (a4-2) -- (a-13);
    \draw[bdedgeg] (a4-1) -- (a03);
    \draw[vec] (a00) -- (a-11);

      \foreach \x in {-2,...,4} { 
        \foreach \y in {-2,...,3} { 
          \draw[dot] (a\x\y) circle (4pt);
        } 
      } 
    \draw (-6,0) circle (0pt);
    \draw  (8,0) circle (0pt);
    \draw[vert] (a00) circle (4pt);
    \end{tikzpicture}
  }
  \caption{The polytope $\Acore$ and the linear space $L_\setA$ for
    the example in Figure~\ref{fig:core}.}
  \label{fig:acoreandlinspan}
\end{figure}
From now on we fix some parameter $\alpha>0$ and restrict to lattice
polytopes in $\pccan(d)$.  We subdivide this set according to the
configuration of primitive normal vectors spanning the core face of a
polytope in $\pccan(d)$, \textit{i.e.}\ according to the polytope
$\Acore$ it generates. To this end let $\setA:=\{\,\va_1, \ldots,
\va_m\,\}\subset (\Z^d)^*$ be a set of primitive vectors that
positively span the linear space they generate (\textit{i.e.}\ there
are $\lambda_1, \ldots, \lambda_m\ge 0$ and not all $0$ such that
$\sum\lambda_i\va_i=\0$).  For such a set $\setA$ and parameters
$d\in\N$ and $\alpha>0$ we consider the set
\begin{align*}
  \nclass(d, \alpha, \setA)\ :=\ \left\{\,P\, \mid
    P\,\in\,\pccan(d)\text{ and } \cf(P)\;=\; \setA\right\}\,.
\end{align*}
Note that we do not require that the vectors in $\setA$ are the
generators of rays of a subfan of $\Sigma_P$. We just assume that they
generate rays in $\Sigma_P(1)$ and are normal to $\Caff(P)$. We will
show that, up to lattice equivalence and for fixed $d\in\Z_{>0}$ and
$\alpha>0$, only finitely many of the sets $\nclass(d,\alpha,\setA)$
are non-empty. This is obtained by reducing the problem to the
following result of Lagarias and Ziegler.
\begin{theorem}[{Lagarias \&
    Ziegler~\cite{MR1138580}}]\label{thm:lagariasZiegler} Let integers
    $d, k, r\ge 1$ be given. There are, up to lattice equivalence,
    only finitely many different lattice polytopes of dimension $d$
    with exactly $k$ interior points in the lattice $r\Z^d$.\qed
\end{theorem}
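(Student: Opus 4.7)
My plan is to prove Theorem~\ref{thm:lagariasZiegler} in three stages: establish a volume bound, use it to confine $P$ to a bounded box after a unimodular transformation, and then enumerate the finitely many remaining possibilities.

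\textbf{Stage 1 (Volume bound).} First I would show there is a constant $V=V(d,k,r)$ such that every $d$-dimensional lattice polytope $P$ with exactly $k$ interior points in $r\Z^d$ satisfies $\mathrm{vol}(P)\le V$. For $r=1$ this is Hensley's classical bound. The strategy is to translate an interior point $\vp\in\interior(P)\cap r\Z^d$ to the origin and show, by an iterative covering argument along coordinate directions, that if $\mathrm{vol}(P)$ were too large, then slicing $P$ with integer-translates of coordinate hyperplanes would force more than $k$ points of $r\Z^d$ to lie in $\interior(P)$, a contradiction. This yields a bound of the form $V(d,k,r)=k\cdot C(d,r)^{2^{d+1}}$. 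The general $r>1$ case is handled by replacing ``lattice width $\ge 1$'' with ``$r\Z^d$-width $\ge r$'' and tracking the extra factor of $r^d$ throughout the recursion.

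\textbf{Stage 2 (Confinement to a bounded box).} Given the volume bound, I would then invoke a version of Khinchin's flatness theorem: any $d$-dimensional convex body of bounded volume containing a lattice point of $r\Z^d$ in its interior has bounded $r\Z^d$-lattice width, with a bound depending only on $d$ and $\mathrm{vol}(P)/r^d$. Pick a direction $\vu$ witnessing small width, apply some $A\in\mathrm{GL}_d(\Z)$ mapping $\vu$ onto $\ve_d$, and slice $P$ by the finitely many integer hyperplanes $\{x_d=j\}$ it meets. Each slice is a $(d{-}1)$-dimensional lattice polytope of bounded volume by Fubini, so after $d$ iterations of the argument, $P$ is contained in a cube $[-N,N]^d$ with $N=N(d,k,r)$.

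\textbf{Stage 3 (Enumeration) and the main obstacle.} Once $P\subseteq[-N,N]^d$, its vertex set lies inside the finite set $\Z^d\cap[-N,N]^d$, leaving only finitely many lattice polytopes up to lattice equivalence (in fact even up to translation). The main obstacle I expect is Stage~1: extracting a genuinely effective volume bound with the correct dependence on $d$, $k$, and $r$ is delicate, since the naive iteration produces a tower of exponentials in $d$ and requires sharp combinatorial-geometric inequalities to tame. The parameter $r$ itself is largely a book-keeping nuisance beyond the volume-bound step, since Stages~2 and 3 go through essentially verbatim with $r\Z^d$ replacing $\Z^d$ in all width and enumeration statements.
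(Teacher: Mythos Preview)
The paper does not prove this theorem; it cites Lagarias and Ziegler and only sketches their two steps: first bound the volume, then show that any lattice polytope of volume $V$ is unimodularly equivalent to one inside a cube of volume $d!V$. Your three-stage outline has the same architecture, and Stage~1 (the Hensley-type volume bound, extended to the sublattice $r\Z^d$) is the right first step.

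Stage~2, however, has a genuine gap. First a terminological point: Khinchin's flatness theorem bounds the lattice width of convex bodies that contain \emph{no} lattice point; it says nothing about bodies that do. The width bound you actually need (bounded volume implies bounded lattice width) is true, but it comes from Minkowski's theorem applied to the polar body together with a Mahler-type volume inequality, not from flatness. More seriously, the slicing recursion does not work as written. The slices $P\cap\{x_d=j\}$ are generally \emph{not} lattice polytopes (their vertices lie in $\Q^{d-1}$, not $\Z^{d-1}$), so the inductive hypothesis does not apply to them. Fubini bounds only the \emph{sum} of the slice volumes, not each slice individually. And even if each slice could separately be placed in a small box by some unimodular map of $\Z^{d-1}$, those maps differ from slice to slice, so they do not assemble into a single element of $\mathrm{GL}_d(\Z)$ putting all of $P$ into a box.

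The actual Lagarias--Ziegler Step~2 avoids all of this: choose a full-dimensional lattice simplex $\Delta\subseteq P$ (any cell of a lattice triangulation), send one vertex to $\0$ and use Hermite normal form on the edge vectors; since $\det(\text{edge matrix})=d!\,\mathrm{vol}(\Delta)\le d!\,V$, the transformed edge vectors have bounded entries, and then $P\subseteq d\cdot\Delta - (d-1)\cdot\Delta$ (or a similar containment via barycentric coordinates) lands in a cube of side $O(d\cdot d!\,V)$. No width bounds, no recursion, no slicing. If you want to keep an inductive flavour, replace ``slice'' by ``project onto $e_d^\perp$'' and argue that the projection is again a lattice polytope of bounded volume; but you then have to control how a unimodular map on the projection lifts, which is essentially the Hermite-normal-form argument in disguise.
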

This theorem is based on work of Hensley~\cite{Hensley1983}, who
proved this for $r=1$. The bound has later been improved by
Pikhurko~\cite{1045.52009}.  The proof of Lagarias and Ziegler has two
steps. They first show that the volume of a $d$-dimensional lattice
polytope with exactly $k$ interior points in $r\Z^d$ is bounded. In a
second step, they show that any polytope with volume $V$ can be
transformed into a polytope inside a cube with volume $d!V$.

The next lemma gives the reduction to this theorem by showing that for
$P\in \pccan(d)$ the origin is the only relatively interior point of
$\Acore$ in the lattice $\lceil\frac 1\alpha\rceil\Z^d$. In other
words, this implies that $\Acore$ is $\Q$-Fano if it corresponds to a
lattice polytope in $\pccan(d)$.
\begin{lemma}\label{lem:spectrumcanonical}
  For $\Acore$ and $\alpha$ as above we have $\relint\,
  \alpha\Acore\;\cap\; (\Z^d)^*\; =\; \{\0\}$.
\end{lemma}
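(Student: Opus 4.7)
The plan is to argue by contradiction: assume $\vy\in\relint(\alpha\Acore)\cap(\Z^d)^*$ with $\vy\neq\0$, and exhibit a cone $\sigma\in\Sigma_P$ containing $\vy$ in which $\height_\sigma(\vy)<\alpha$, contradicting the $\alpha$-canonicity of $\Sigma_P$.

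First I would extract a ``short'' representation of $\vy$ in the core normals. Since $\0\in\relint\Acore$ by Lemma~\ref{lem:zeroInInterior} and $\vy/\alpha\in\relint\Acore$, the ray $\R_{\geq 0}\vy$ leaves $\Acore$ at some point $t\vy$ with $t>1/\alpha$, landing in the relative interior of a proper face of $\Acore$ whose vertices lie among the core normals (Lemma~\ref{lem:vertices}). Running the linear program $\min\{\sum\rho_i \,:\, \vy=\sum\rho_i\va_i,\ \rho_i\geq 0\}$ and selecting a basic feasible solution produces a linearly independent support $\{\va_{i_1},\ldots,\va_{i_l}\}$ together with a representation
\begin{align*}
  \vy\,=\,\sum_{j=1}^{l}\nu_j\,\va_{i_j},\qquad\nu_j>0,\qquad\sum_{j=1}^{l}\nu_j\,=\,\tfrac{1}{t}\,<\,\alpha\,.
\end{align*}
The cone $\tau:=\cone(\va_{i_1},\ldots,\va_{i_l})$ is therefore simplicial, and the displayed equation is the \emph{unique} non-negative representation of $\vy$ in its primitive generators.

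Next, since $\Sigma_P$ is a complete fan, $\vy$ lies in some cone $\sigma\in\Sigma_P$. In the ideal case $\sigma=\tau$, simpliciality forces $\height_\sigma(\vy)=1/t<\alpha$, giving the contradiction at once.

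The main obstacle is that $\tau$ need not itself be a cone of $\Sigma_P$: already in the pentagon of Figure~\ref{fig:core} one has $\cf(P)=\{(0,\pm 1)\}$, which spans a line rather than a strictly convex cone. To handle the general situation I would study how $\Sigma_P$ subdivides $\tau$. Every ray $\vb$ of $\Sigma_P$ interior to $\tau$ admits a unique decomposition $\vb=\sum_j\gamma_j\va_{i_j}$, and substituting $\vb$ in place of an appropriately chosen $\va_{i_j}$ in the representation of $\vy$ yields a representation of $\vy$ supported on a strictly smaller simplicial cone of the refinement; a short calculation shows that the new coefficient sum does not exceed $1/t$ provided the ``internal height'' $\sum_j\gamma_j$ of $\vb$ in $\tau$ is at least $1$. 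Establishing this bound—presumably by inducting on $\dim\tau$ and leveraging $\alpha$-canonicity on the cones of $\Sigma_P$ that actually contain $\vb$—is the crux of the argument, and iterating the substitution until $\vy$ lies in a cone of $\Sigma_P$ delivers the desired contradiction.
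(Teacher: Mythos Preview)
Your first move—extracting a representation $\vy=\sum_j\nu_j\va_{i_j}$ with $\sum_j\nu_j<\alpha$ supported on a face of $\Acore$—matches the paper's opening step. The divergence comes next, and it leaves a genuine gap. The paper does \emph{not} try to transport this short representation into a cone of $\Sigma_P$. Instead it picks any maximal cone $\sigma=\cone(\va_i:i\in I)\in\Sigma_P$ containing $\vy$, invokes $\alpha$-canonicity to get a second representation $\vy=\sum_{i\in I}\mu_i\va_i$ with $M:=\sum\mu_i\ge\alpha$, and then evaluates both representations against two points of the \emph{primal} polytope: the vertex $\vv$ dual to $\sigma$ (where $\sprod{\va_i,\vv}=b_i$ for $i\in I$) and a relative-interior core point $\vxcore$ (where $\sprod{\va_i,\vxcore}=b_i-c$ exactly for core normals, strictly less otherwise). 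Pairing with $\vv$ yields $\sum\lambda_ib_i\ge\sum_{i\in I}\mu_ib_i$; pairing with $\vxcore$ yields $\sum_{i\in I}\mu_ib_i\ge\sum\lambda_ib_i+(M-\Lambda)c>\sum\lambda_ib_i$. These two inequalities contradict each other.

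Your route never touches the right-hand sides $b_i$, the vertex $\vv$, or the core point $\vxcore$; it is a purely fan-theoretic argument, and that is precisely why the substitution step stalls. The bound you need—that every primitive ray $\vb$ of $\Sigma_P$ lying in $\tau$ satisfies $\sum_j\gamma_j\ge 1$ when $\vb=\sum_j\gamma_j\va_{i_j}$—is a statement about $\tau$, which is \emph{not} a cone of $\Sigma_P$, so $\alpha$-canonicity says nothing about it directly. And the bound fails for arbitrary simplicial cones on primitive vectors: with $\va_1=(2,1)$, $\va_2=(1,2)$ one has $(1,1)=\tfrac13\va_1+\tfrac13\va_2$, internal height $\tfrac23$. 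Nothing in your argument rules out such a configuration among the core normals with $(1,1)$ a ray of $\Sigma_P$. There is also a secondary issue: even if the substitution terminated in a simplicial cone $\tau'$ whose rays lie in $\Sigma_P(1)$, this only exhibits \emph{one} representation of $\vy$ with small coefficient sum inside whatever (possibly non-simplicial) cone $\sigma\in\Sigma_P$ contains $\tau'$, whereas $\height_\sigma$ is defined as the \emph{maximum} over all non-negative representations. The missing ingredient in both places is exactly what the paper supplies: bring the polytope $P$ itself back into the picture via $b_i$, $\vv$, and $\vxcore$.
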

\begin{proof}
  We prove this by contradiction. So assume that there is some vector
  $\va\in (\Z^d)^*\setminus\{\0\}$ contained in the relative interior
  of $\alpha\Acore$. As $\0\in \relint\, \alpha\Acore$ the point $\va$
  is contained in the cone spanned by the vertices of some facet $F$
  of $\Acore$. Hence, we can find $\lambda_1, \ldots, \lambda_m\ge 0$
  with $\lambda_i=0$ if $\va_i\not \in F$ and
  \begin{align*}
    \Lambda\ :=\ \sum\lambda_i\,<\, \alpha
  \end{align*}
  such that $\va\, =\, \sum_{i=1}^m\lambda_i\va_i$.  Note that
  $\Lambda$ is strictly less than $\alpha$ as $\va$ is in the relative
  interior of $\alpha\Acore$.  The normal fan $\Sigma_P$ of $P$ is
  complete. Hence, there is some (possibly not unique) maximal cone
  $\sigma\in \Sigma_P(d)$ that contains $\va$. Then
  $\sigma=\cone(\va_i\mid i\in I)$ for some set $I\subseteq[n]$ (where
  $\va_1, \ldots, \va_n$ is the list of \emph{all} normal vectors of
  facets of $P$).  This gives a second representation of $\va$ in the
  form $\va\, =\, \sum_{i\in I}\mu_i\va_i$, where $\mu_i\ge 0$.  By
  assumption, the normal fan $\Sigma_P$ is $\alpha$-canonical,
  \textit{i.e.} the height of $\va$ in $\sigma$ is at least
  $\alpha$. Hence, we can choose the $\mu_i$ in such a way that
  \begin{align*}
    M\ :=\ \sum_{i\in I}\mu_i \ \ge\ \alpha\,.
  \end{align*}
  Note that this implies
  \begin{align}
    \Lambda\ <\ \alpha\ \le \ M\label{eq:LleM}\,.
  \end{align}
  The cone $\sigma\,=\,\cone(\va_i\mid i\in I)$ is a maximal cone in
  $\Sigma_P$. Hence, it is the normal cone of a vertex $\vv$ of
  $P$. By its definition, this implies that $\sprod{\va_i,\vv}=b_i$ if
  $i\in I$ and $\sprod{\va_i,\vv}<b_i$ otherwise.  Thus we compute
  \begin{align}\label{eq:spectrum1}
    \sum_{i=1}^m\lambda_ib_i\ &\ge\ \sum_{i=1}^m\lambda_i\sprod{\va_i,
      \vv}\ =\ \sprod{\va, \vv}\ =\ \sum_{i\in I}\mu_i\sprod{\va_i,
      \vv}\ =\ \sum_{i\in I}\mu_ib_i\,.
  \end{align}

  Now let $\vxcore$ be a relative interior point of $C=\core\,P$. Then
  \begin{alignat*}{4}
    &&\sprod{\va_i,\vxcore}\ &=\ b_i-c &\quad& \text{for}& 1\ \le\ i\ \le\ m\\
    \text{and}&\quad&
    \sprod{\va_i,\vxcore}\ &<\ b_i-c && \text{otherwise}\,.
  \end{alignat*}
  Using this we compute
  \begin{alignat*}{3}
    \sum_{i\in I}\mu_ib_i -\ Mc\ &\ =\ \sum_{i\in I}\mu_i(b_i-c)
    &&\ge\ \sum_{i\in I}\mu_i\sprod{\va_i, \vxcore}\ && =\ \sprod{\va,
      \vxcore}\ =\ \sum_{i=1}^m\lambda_i\sprod{\va_i, \vxcore}\\
    &&&&&=\ \sum_{i=1}^m\lambda_i(b_i-c)\ =\
    \sum_{i=1}^m\lambda_ib_i\ -\ \Lambda c\,.
  \end{alignat*}
  We can shorten this to
  \begin{align*}
    \sum_{i\in I}\mu_ib_i\ &\ge\
    \sum_{i=1}^m\lambda_ib_i\ +\ (M-\Lambda)c\ > \ \sum_{i=1}^m\lambda_ib_i\;,
  \end{align*}
  where the last strict inequality follows from \eqref{eq:LleM}.  This
  contradicts \eqref{eq:spectrum1}.
\end{proof}
We can now combine this result with Theorem~\ref{thm:lagariasZiegler}
to obtain the desired finiteness of configurations with non-empty
$\nclass(d,\alpha,\setA)$.
\begin{corollary}\label{cor:spectrumcanonical}
  Let $d\in\N$ and $\alpha>0$ be given. Then there are, up to
  unimodular transformation, only finitely many sets
  $\setA\subseteq(\Z^d)^*$ such that $\nclass(d,\alpha,\setA)$ is
  non-empty.
\end{corollary}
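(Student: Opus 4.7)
The plan is to reduce the finiteness statement to Theorem~\ref{thm:lagariasZiegler} by showing that, for each $\setA$ with $\nclass(d,\alpha,\setA)\neq\varnothing$, the polytope $\Acore$ is, after a unimodular change of coordinates, a lattice polytope with a uniformly bounded number of interior lattice points in a suitably scaled sublattice. By Lemma~\ref{lem:vertices} the set $\setA$ coincides with the vertex set of $\Acore$, so it suffices to bound $\Acore$ up to unimodular equivalence.

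Set $r:=\lceil 1/\alpha\rceil$, so that $1/r\le\alpha$. Since $\0\in\relint\Acore$ by Lemma~\ref{lem:zeroInInterior} and $\Acore$ is convex, the star-shaped inclusion $(1/r)\Acore\subseteq\alpha\Acore$ holds, and Lemma~\ref{lem:spectrumcanonical} gives
\begin{align*}
  \relint\bigl((1/r)\Acore\bigr)\,\cap\,(\Z^d)^*\ =\ \{\0\}.
\end{align*}
Rescaling by $r$ turns this into $\relint\Acore\cap r(\Z^d)^* = \{\0\}$, so $\Acore$ is a lattice polytope whose only relative interior point in the sublattice $r(\Z^d)^*$ is the origin.

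The main obstacle is that $\Acore$ is typically not full-dimensional in $(\R^d)^*$, so Theorem~\ref{thm:lagariasZiegler} cannot be applied directly in ambient dimension $d$. To deal with this, I would set $e:=\dim\Acore$ and $L_\setA:=\aff\Acore$; since $L_\setA$ is a rational linear subspace containing $\0$, $L_\setA\cap(\Z^d)^*$ is a sublattice of rank $e$, and a unimodular transformation of $(\Z^d)^*$ sends it onto $(\Z^e)^*\times\{\0\}$. In these coordinates $\Acore$ becomes an $e$-dimensional lattice polytope in $\R^e$ whose only interior point in the sublattice $r(\Z^e)^*$ is $\0$. Theorem~\ref{thm:lagariasZiegler} applied with parameters $e$, $k=1$, and $r=\lceil 1/\alpha\rceil$ then yields, for each fixed $e$, only finitely many possibilities for $\Acore$ up to unimodular equivalence in $\R^e$. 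Taking the union over $e\in\{1,\ldots,d\}$ produces finitely many lattice polytopes in $(\R^d)^*$ realising all admissible $\Acore$, and hence finitely many vertex sets $\setA$, up to unimodular transformation. Beyond Lemma~\ref{lem:spectrumcanonical}, this dimension bookkeeping is the only real content of the argument.
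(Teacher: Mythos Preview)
Your proof is correct and follows essentially the same route as the paper: deduce from Lemma~\ref{lem:spectrumcanonical} that $\Acore$ has exactly one relative interior lattice point in the sublattice $\lceil 1/\alpha\rceil(\Z^d)^*$, and then invoke Theorem~\ref{thm:lagariasZiegler}. Your treatment is in fact more careful than the paper's terse argument, since you explicitly handle the case where $\Acore$ is not full-dimensional by passing to the rational subspace $L_\setA$ and applying Lagarias--Ziegler in each ambient dimension $e\le d$; the paper silently absorbs this step into ``lattice equivalence''.
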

\begin{proof}
  By Lemma~\ref{lem:spectrumcanonical} $\relint\,\alpha\Acore$ only
  contains the origin. Hence,
  $\relint\,\Acore\cap\left\lceil\frac1\alpha\right\rceil\Z^d$
  contains exactly one point. By Theorem~\ref{thm:lagariasZiegler}
  there are only finitely many possible such configurations, up to
  lattice equivalence.
\end{proof}
In other words, this corollary shows that there are, up to lattice
equivalence, only finitely many configurations that occur as the set
$\cf(P)$ of core normals of a lattice polytope $P\in\pccan(d)$.

Let $P$ be a polytope in $\nclass(d,\alpha,\setA)$ with $\Q$-codegree
$c^{-1}=\qcd(P)$ given by
\begin{align*}
  P\ &:=\ \{\,\vx\,\mid \, \sprod{\va_i, \vx}\, \le \, b_i\quad 1\le
  i\le n\,\}
\end{align*}
for some $b_i$, $n\ge m$, and $\setA=\{\va_1, \ldots, \va_m\}$. Choose
an relative interior point $\vxcore$ in $\core\,P$. Then
$\sprod{\va_i,\vxcore}+c=b_i$ for $1\le i\le m$. The $b_i$ are the
right hand sides of our inequality description of a lattice polytope,
so in particular, $\sprod{\va_i,\vxcore}+c\in \Z$ for $1\le i\le m$.
Let $A$ be the $(m\times d)$-matrix whose rows are the $\va_j$, $1\le
j\le m$. Let $\1\in \R^m$ be the all-ones vector. Then, in matrix
form, this reads
\begin{align*}
  A\vxcore\,+\,c\1\in \Z^m\,.
\end{align*}
In other words, whenever $\setA$ is the set of core normals of a
lattice polytope with $\Q$-codegree $q$, then there is a rational
point $\vy$ such that
\begin{align}\label{eq:spectrumneccond}
  A\vy\,+\,c\1\in \Z^m\,.
\end{align}
Hence, we have found a necessary condition that any pair $\setA$ and
$c$ must satisfy if they come from a lattice polytope $P\in \nclass(d,
\alpha, \setA)$. We use this to study possible values for $c$.
\begin{lemma}\label{prop:spectrumcanonical}
  Let $\alpha,\eps>0$, $d\in \N$ and $\setA$ as above.  Then
  \begin{align*}
    \left\{\,\qcd(P)\,\mid \, P\in \nclass(d, \alpha, \setA)\text{ and
      } \qcd(P)\ge \eps\,\right\}
  \end{align*}
is finite.
\end{lemma}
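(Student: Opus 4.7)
The plan is to exploit the necessary condition \eqref{eq:spectrumneccond} directly: setting $c = \qcd(P)^{-1}$, it reads $c\1 \in \Z^m + A\R^d$, and I intend to show this pins $c$ into a discrete subset of $\R$. Combined with the upper bound $c \leq 1/\eps$ coming from $\qcd(P) \geq \eps$, finiteness follows immediately.

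The first step is to pass to the quotient $\R^m/V$, where $V := A\R^d$. Because the rows of $A$ are integral, $V$ is a rational linear subspace of $\R^m$; this is enough to guarantee that $\Z^m + V$ is closed in $\R^m$ and that its image $L$ in $\R^m/V$ is a discrete subgroup (in fact a lattice of rank $m - \dim V$). Writing $\bar\1 \in \R^m/V$ for the image of $\1 \in \Z^m$, the set of parameters $c$ satisfying \eqref{eq:spectrumneccond} is
\[
S \;=\; \{\,c \in \R : c\bar\1 \in L\,\},
\]
which is a subgroup of $\R$ as the preimage of $L$ under the homomorphism $c \mapsto c\bar\1$.

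The crucial point — and what I expect to be the main obstacle — is to rule out $\bar\1 = 0$. If $\bar\1 = 0$, then $\1 = A\vu$ for some $\vu \in \R^d$, which forces $\sprod{\va_i,\vu} = 1$ for every $1 \leq i \leq m$. All core normals would then lie on the affine hyperplane $\{\vx : \sprod{\vx,\vu} = 1\}$, which misses the origin. But Lemma~\ref{lem:zeroInInterior} places $\0 \in \relint\Acore$, hence in $\aff(\va_1,\ldots,\va_m)$ — a contradiction. So $\bar\1 \neq 0$, and since $L$ is discrete, $S$ is a discrete subgroup of $\R$. As $\1 \in \Z^m$ gives $\bar\1 \in L$, we have $1 \in S$, and therefore $S = \tfrac{1}{k}\Z$ for some positive integer $k = k(\setA)$ (the alternative $S = \{0\}$ would force $\nclass(d,\alpha,\setA) = \varnothing$, where the statement is trivial).

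Finally, $\qcd(P) \geq \eps$ translates to $c \leq 1/\eps$, so the admissible values of $c$ lie in the finite set $\tfrac{1}{k}\Z \cap (0, 1/\eps]$. Since $\qcd(P) = c^{-1}$, this produces only finitely many $\Q$-codegrees on $\nclass(d,\alpha,\setA)$ above the threshold $\eps$, proving the lemma.
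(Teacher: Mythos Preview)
Your proof is correct and follows essentially the same approach as the paper's: both identify the column space $V=A\R^d=L_\setA\subseteq\R^m$, show $\1\notin V$, and conclude that the set of $c$ with $c\1\in\Z^m+V$ is discrete, hence finite in $(0,\eps^{-1}]$. Your argument for $\1\notin V$ via Lemma~\ref{lem:zeroInInterior} is just a rephrasing of the paper's direct appeal to positive spanning; the quotient formulation and the identification $S=\tfrac1k\Z$ are cosmetic variations on the paper's ``finitely many translates $L_\setA+c\1$ contain a lattice point'' statement.
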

\begin{proof}
  Let $A$ be the matrix with rows given by the vectors in $\setA$ as
  above, and let $\ov\va_k$, $1\le k\le d$ be the columns of $A$. Let
  $L_\setA\subseteq\R^m$ be the linear span of $\ov\va_1, \ldots,
  \ov\va_d$. See Figure~\ref{fig:acoreandlinspan}.\subref{fig:linspan}.

  We will now show that \eqref{eq:spectrumneccond} has only finitely
  many possible solutions for $c$ in the range $0<c\le \eps^{-1}$
  (but, of course, for any given $c$ such
  that~\eqref{eq:spectrumneccond} has at least one solution there are
  infinitely many solutions $\vy$ for this fixed $c$).  By assumption,
  the vectors $\va_1, \ldots, \va_m\in \setA$ positively span their
  linear span. Hence, $\rank(\va_1, \ldots, \va_m)\le m-1$ and
  $L_\setA$ is a proper subspace of $\R^d$. Further, as there are
  $\lambda_1, \ldots, \lambda_m\ge 0$ and not all zero such that
  $\0=\sum\lambda_i\va_i$, there is no $\vw\in \R^d$ such that
  $\sprod{\va_j,\vw}>0$ for all $1\le j\le m$.  In other words,
  $L_\setA$ meets the interior of the positive orthant of $\R^m$ only
  in the origin. In particular, $\1\not\in L_\setA$.

  Thus, there are only finitely many translates of the form
  $L_\setA+c\1$ for $0< c\le \eps^{-1}$ that contain a lattice point
  (namely those that are at distance $\frac p{\det \ov M}$ in
  direction $\1$, where $M$ is the row vector matrix of any lattice
  basis of $L_\setA$ and $\ov M=(M,\1)$ the matrix $M$ with an
  additional column of $\1$). Put differently, the set
  \begin{align*}
    \left\{c\mid A\vx\;+\; c\1\in \Z^m\text{ for some
        $\vx\in\R^d$ and $0<c\le \eps^{-1}$}\;\right\}
  \end{align*}
  is finite.  This proves the claim.
\end{proof}
Combining this with Corollary~\ref{cor:spectrumcanonical} finally
proves Theorem~\ref{thm:spectrumcanonical}.
\begin{remark}
  The proof of Theorem~\ref{thm:spectrumcanonical} also gives a way to
  explicitly compute a (finite superset of) the possible values of
  $\qcd(P)$. Indeed, all possible values are contained in the set
  $\left\{\,p(\det \ov M)^{-1}\;\,\middle|\,\; p\in
    \Z_{>0}\,\right\}$, where $M$ is any matrix whose rows are a
  lattice basis of the lattice in the linear subspace $L_\setA$ of the
  previous lemma and $\ov M$ this matrix with an additional
  column of ones. 
\end{remark}

\end{document}